\documentclass[letterpaper, 10pt, journal, twocolumn, final]{IEEEtran}

\usepackage[latin1]{inputenc}

\usepackage{amssymb,amsmath,amsfonts,amsthm}
\usepackage{microtype}
\usepackage{url}
\usepackage[noadjust]{cite}
\usepackage{hyperref}
\usepackage{xifthen}
\usepackage{xpatch}

\usepackage[dvipsnames]{xcolor}
\usepackage{tikz}
\usepackage{subfig}
\usepackage{graphicx}

\usepackage{algorithm}
\usepackage{algpseudocode}
\usepackage{algorithmicx}

\newtheorem{theorem}{Theorem}

\newtheorem{corollary}{Corollary}
\newtheorem{lemma}{Lemma}
\newtheorem{assumption}{Assumption}
\newtheorem{remark}{Remark}

\newcommand{\R}{\mathbb{R}}

\newcommand{\set}{\{1,\dots,N\}}
\newcommand{\col}{\textsc{col}}
\newcommand{\Xtp}{X_{t+1}}
\newcommand{\Xt}{X_{t}}

\newcommand{\Xit}{X_{i,t}}
\newcommand{\xtp}{x_{t+1}}
\newcommand{\ytp}{y_{t+1}}
\newcommand{\stp}{s_{t+1}}
\newcommand{\xt}{x_{t}}
\newcommand{\txt}{\tilde{x}_{t}}
\newcommand{\yt}{y_{t}}
\newcommand{\st}{s_{t}}
\newcommand{\xtpstar}{x_{t+1}^\star}

\newcommand{\xtstar}{x_t^\star}

\newcommand{\xitp}{x_{i,t+1}}
\newcommand{\sitp}{s_{i,t+1}}
\newcommand{\yitp}{y_{i,t+1}}
\newcommand{\xit}{x_{i,t}}
\newcommand{\txik}{\tilde{x}_{i,t}}
\newcommand{\sit}{s_{i,t}}
\newcommand{\yit}{y_{i,t}}
\newcommand{\ytpavg}{\bar{y}_{t+1}}
\newcommand{\stpavg}{\bar{s}_{t+1}}
\newcommand{\ytavg}{\bar{y}_t}
\newcommand{\stavg}{\bar{s}_t}
\newcommand{\dk}{d_t}

\newcommand{\ftp}{f_{t+1}}
\newcommand{\ft}{f_{t}}
\newcommand{\fitp}{f_{i,t+1}}
\newcommand{\fit}{f_{i,t}}

\newcommand{\phitp}{\phi_{t+1}}
\newcommand{\phiitp}{\phi_{i,t+1}}
\newcommand{\phit}{\phi_{t}}
\newcommand{\phiit}{\phi_{i,t}}

\newcommand{\1}{\mathbf{1}}

\newcommand{\norm}[1]{\left \|#1 \right \|}

\newcommand{\I}{\tfrac{\1\1^\top}{N}}
\newcommand{\blkdiag}{\text{blkdiag}}
\newcommand{\T}{^\top}
\newcommand{\vertiii}[1]{{\left\vert\kern-0.25ex\left\vert\kern-0.25ex\left\vert #1 
		\right\vert\kern-0.25ex\right\vert\kern-0.25ex\right\vert}}
\newcommand{\ped}{\theta}
\newcommand{\dist}{\mathrm{dist}}

\newcommand{\rhomax}{\rho_{\text{max}}}

\newcommand\oprocendsymbol{\hbox{$\square$}}
\newcommand\oprocend{\relax\ifmmode\else\unskip\hfill\fi\oprocendsymbol}
\def\eqoprocend{\tag*{$\square$}}

\graphicspath{{figs/}}

\def\algo/{Projected Aggregative Tracking}

\begin{document}

\title{Distributed Online Aggregative Optimization\\ 
  for Dynamic Multi-robot Coordination}

\author{Guido Carnevale, 
  Andrea Camisa, Giuseppe Notarstefano
  \thanks{G.~Carnevale, A.~Camisa and G.~Notarstefano are with the Department of Electrical, 
  Electronic and Information Engineering, University of Bologna, Bologna, Italy,
  \texttt{\{guido.carnevale, a.camisa, giuseppe.notarstefano\}@unibo.it}.
  This result is part of a project that has received funding from the European 
  Research Council (ERC) under the European Union's Horizon 2020 research 
  and innovation programme (grant agreement No 638992 - OPT4SMART).
  }
}

\maketitle

\begin{abstract}
  This paper focuses on an online version of the emerging distributed
  constrained aggregative optimization framework, which is particularly suited
  for applications arising in cooperative robotics. Agents in a network want to
  minimize the sum of local cost functions, each one depending both on a local
  optimization variable, subject to a local constraint, and on an aggregated
  version of all the variables (e.g., the mean). We focus on a challenging
  online scenario in which the cost, the aggregation functions and the
  constraints can all change over time, thus enlarging the class of captured
  applications. Inspired by an existing scheme, we propose a distributed
  algorithm with constant step size, named Projected Aggregative Tracking, to
  solve the online optimization problem. We prove that the dynamic regret is
  bounded by a constant term and a term related to time variations. Moreover, in
  the static case (i.e., with constant cost and constraints), the solution
  estimates are proved to converge with a linear rate to the optimal
  solution. Finally, numerical examples show the efficacy of the proposed
  approach on a robotic surveillance scenario.
\end{abstract}
\begin{IEEEkeywords}
	Cooperative Control, Distributed Optimization,
	Optimization algorithms
\end{IEEEkeywords}

\section{Introduction}
\label{sec:introduction}

Distributed optimization captures a variety of estimation and learning problems
over networks, including distributed data classification and localization in
smart sensor networks, to name a few.  The term ``online'' refers to scenarios
in which the problem data is not available a-priori, but rather it arrives
dynamically while the optimization process is executed.  In this paper, we
consider an online distributed optimization set-up in which agents in a network
must cooperatively minimize the sum of local cost functions that depend both on
a local optimization variable and on a global variable obtained by performing
some kind of aggregation of all the local variables (as, e.g., the mean).  This
\emph{aggregative optimization} set-up was introduced in the pioneering
work~\cite{li2020distributed}.
The framework is fairly general and embraces several control applications
of interest such as cooperative robotics and multi-vehicle surveillance.
Originally, it stems from distributed aggregative games~\cite{koshal2016distributed,
liang2017distributed,gadjov2018passivity,yi2019operator,belgioioso2020distributed},
where however the objective is to compute a (generalized) Nash equilibrium
rather than an optimal solution cooperatively.

There exists a vast literature on distributed online optimization, which
addresses two main optimization set-ups known as cost-coupled (or consensus
optimization) and constraint-coupled, see,
e.g.,~\cite{notarstefano2019distributed}. In the cost-coupled framework the goal
is to optimize a cost function given by the sum of several local functions with
a common decision variable.
Distributed online algorithms based on subgradient schemes are proposed in
\cite{cavalcante2013distributed,towfic2014adaptive,akbari2015distributed}, while
primal-dual or dual approaches are proposed in
\cite{mateos2014distributed,hosseini2016online,yuan2017adaptive}. The use of a
model for the minimum variation is considered
in~\cite{shahrampour2017distributed}, where a mirror descent algorithm is proposed.
Distributed online optimization is used in~\cite{zhou2017incentive} to
handle the distribution grids problem. In~\cite{akbari2019individual} an online
algorithm based on the alternating direction method of multipliers is proposed.
In the more recent constraint-coupled optimization framework, each local
objective function depends on a local decision variable, but all the variables
are coupled through separable coupling constraints. In~\cite{lee2017sublinear}
this set-up is addressed in an online setting and a sublinear regret bound is
ensured by using a distributed primal dual algorithm. The same result is
achieved in~\cite{li2020distributedConstraints} by introducing a push-sum
mechanism to allow for directed graph topologies.
Time-varying inequality constraints have
been taken into account in~\cite{yi2020distributed}, in which a distributed primal dual
mirror descent algorithm is proposed.

We recall that the above works are suited for cost-coupled %
and constraint-coupled set-ups.
The aggregative optimization framework addressed in this paper has been
recently introduced in works~\cite{li2020distributed,li2020distributedOnline} which consider respectively a static
unconstrained framework and an online constrained one. The distributed
algorithms proposed in these two papers leverage a tracking action to reconstruct
both the aggregative variable and the gradient of the whole cost function.
This ``tracking action'' is based on dynamic average
consensus (see~\cite{zhu2010discrete,kia2019tutorial}) and has been introduced
in the gradient tracking scheme for cost-coupled
optimization~\cite{shi2015extra,varagnolo2015newton,dilorenzo2016next,nedic2017achieving,qu_harnessing_2018,xu2017convergence,xi2017add,xin2018linear,scutari2019distributed}.
The gradient tracking has been applied to online optimization
in~\cite{zhang2019distributed}, in~\cite{notarnicola2020personalized} where partially
unknown cost functions are considered, and in~\cite{carnevale2020distributed}
where adaptive momenta are used.
The contributions of this paper are as follows.
We focus on an online constrained aggregative optimization set-up over
peer-to-peer networks of agents inspired to the set-up studied in the seminal work~\cite{li2020distributedOnline}. Despite the fact that we do not assume boundedness of the gradients and the feasible sets, we demonstrate stronger theoretical results than the state of art, i.e., tighter regret bounds and, by using our method in a time-invariant problem, linear convergence rate (instead of a sublinear one) to the optimal solution.
Additionally, the optimization set-up we consider enlarges the scope of previous works.
Indeed, we consider a wider time-varying framework in which also the feasible sets and the aggregation rules vary over time. These generalizations introduce additional terms in the regret analysis and thus pose new challenges that must be appropriately handled.
In \algo/, each
agent projects the updated local solution estimate on its time-varying
constraint set and then performs a convex combination with the current
estimate. Moreover, the trackers of the aggregative variable are generalized to
handle time-variation of the aggregation rules.
Under mild assumptions on the cost and constraint variations, we
provide a bound about the dynamic regret for the proposed scheme. We additionally provide a regret result on the violation of the
time-varying constraints.
In order to obtain
this result we study a dynamical system describing the algorithmic evolution
of: (i) the error of the solution estimate with respect to the minimum, (ii)
the consensus error of the aggregative variable trackers, and (iii) the
consensus error of the global gradient trackers.  Such dynamics is characterized
by a Schur system matrix, by which we are then able to
draw conclusions on the dynamic regret.
For the static case
(with constant cost and constraints), we show that the algorithm iterates converge to
the optimal solution with linear rate. Notably, our algorithm allows for
a constant step-size.
To corroborate the theoretical analysis, we show numerical simulations from a
cooperative robotics scenario in which robots have to accomplish a surveillance task.
The proposed scenario is dynamic and intrinsically characterized by time-varying
cost functions, aggregation functions and constraints, thus it cannot
be addressed by using state-of-art techniques.

The rest of the paper is organized as follows. Section~\ref{sec:problem_formaulation} describes the distributed online aggregative optimization framework and presents the distributed algorithm with its convergence properties. The algorithm analysis is performed
in Section~\ref{sec:analysis}. Finally, Section~\ref{sec:numerical_experiments} shows the effectiveness of our method.

\emph{Notation:}
We use $\col (v_1, \ldots, v_n)$ to denote the vertical concatenation of the column
vectors $v_1, \ldots, v_n$. We use $\blkdiag(M_1,\dots,M_N)$ to denote the block diagonal matrix where the $i$-th diagonal block is given by the matrix $M_i \in \R^{n_i\times m_i}$ for all $i \in \{1,\dots,N\}$.
The Kronecker product is denoted by
$\otimes$. The identity
matrix in $\R^{m\times m}$ is $I_m$, while $0_m$ is the zero matrix in
$\R^{m\times m}$. The column vector of $N$ ones is denoted by $1_N$ and 
we define $\1 \triangleq 1_N \otimes I_d$.
Dimensions are omitted whenever they are clear from the  context. Given a closed and convex set $X$, we use $P_X[y]$ to denote the projection of a vector $y$ on a $X$, namely $P_X[y] = \arg \min_{x \in X}\norm{x-y}$, while we use $\dist(y,X)$ to denote its distance from the set, namely $\dist(y,X) = \min_{x \in X}\norm{x-y}$. Given $x \in \R^n$, we use $[x]^+$ to denote $\max\{0,x\}$ in a component-wise sense. 
Let $M\in \R^{n\times n}$, then we denote as $\rhomax(M)$ its spectral radius.

\section{Problem Formulation and Algorithm Description}
\label{sec:problem_formaulation}
In this paper, we consider distributed online aggregative optimization problems
that can be written as
\begin{align}
\label{eq:online_aggregative_problem}
\begin{split}
	\min_{(x_1,\dots,x_N) \in \Xt} \: & \: \sum_{i=1}^{N}\fit(x_i,\sigma_t(x))
\end{split}
\end{align}
in which $x := \col(x_1, \dots, x_N) \in \R^n$ is the global decision vector,
with each $x_i \in \R^{n_i}$ and $n = \sum_{i=1}^{N} n_i$. The global decision
vector at time $t$ is constrained to belong to a set $\Xt \subseteq \R^n$ that can
be written as $\Xt = (X_{1,t} \times \ldots \times X_{N,t})$, where each
$\Xit \subseteq \R^{n_i}$. The functions $\fit: \R^{n_i} \times \R^d \to \R$
represent the local objective functions at time $t$, %
while the \emph{aggregation function} $\sigma_t(x)$ has the form
\begin{align}
	\sigma_t(x) := \frac{\sum_{i=1}^{N}\phiit(x_i)}{N},
\end{align}
where each $\phiit : \R^{n_i} \to \R^d$ is the $i$-th contribution to the aggregative variable at time $t$.
We compactly denote the cost function of problem~\eqref{eq:online_aggregative_problem} as $\ft(x,\sigma_t(x)) := \sum_{i=1}^N \fit(x_i,\sigma_t(x))$.
In problem \eqref{eq:online_aggregative_problem}, $\ft(\cdot,\sigma_t(\cdot))$
is not known to any agent: each of them can only privately access $\fit$, $\Xit$, and $\phiit$. We remark that each
agent $i$ accesses its private information $\fit$, and $\phiit$ only once its estimate $\xit$ has been computed. The idea is to solve problem~\eqref{eq:online_aggregative_problem} in a distributed way over a network of $N$
agents communicating according to a graph
$\mathcal{G} := (\set,\mathcal{E}, \mathcal{A})$, where $\set$ is the set
of agents, $\mathcal{E} \in \set \times \set$ is the
edges set, and $\mathcal{A} \in \R^{N \times N}$ is the weighted adjacency
matrix. Each agent $i$ can exchange data only with its
neighbors defined by $\mathcal{E}$.

The goal is to design distributed algorithms to seek a minimum for problem~\eqref{eq:online_aggregative_problem}. Next, we will denote as
$\nabla_1\fit(\cdot,\cdot)$ and as $\nabla_2\fit(\cdot,\cdot)$ the
gradient of $\fit$ with respect to respectively the first argument and the second
argument. Moreover, we also introduce $G_t: \R^n \times \R^{Nd} \to \R^n$ defined as $G_t(x, s) := \nabla_1\ft(x, s) + \nabla\phi(x)\frac{\1}{N}\sum_{i=1}^N \fit(x_i,s_i)$, where $x := \col(x_1,\dots,x_N) \in \R^n$, $s := \col(s_1, \ldots, s_N) \in \R^{Nd}$ with each $x_i \in \R^{n_i}$, $s_i \in \R^d$ for all $i \in \{1, \ldots, N\}$, $\nabla_1 \ft(x,s) := \col(\nabla_1 f_{1,t}(x_{1},s_{1}),\dots,\nabla_1 f_{N,t}(x_{N},s_{N}))$, and $\nabla\phi(x) := \blkdiag(\nabla\phi_1(x_1),\dots,\nabla\phi_N(x_N)) \in \R^{n \times Nd}$.

Let $\xit$ be the solution estimate of the problem at time $t$
maintained by agent $i$, and let $\xtstar$ be the (unique) minimizer of
$\ft(x, \sigma_t(x))$ over the set $\Xt$. Indeed, as we will formalize within Assumption~\ref{ass:convexity}, strong convexity of $\ft(x,\sigma_t(x))$ guarantees existence (and uniqueness) of $\xtstar$.
Then, given a finite value $T > 1$, the agents want to minimize the dynamic regret:
\begin{align}\label{eq:regret}
R_T := \sum_{t=1}^T \ft(\xt,\sigma_t(\xt)) -  \sum_{t=1}^T\ft(\xtstar,\sigma_t(\xtstar)).
\end{align}  
Another popular metric is the so-called static regret~\cite{hosseini2016online}.
However, as done in most of the literature, we focus on~\eqref{eq:regret}, which is more challenging to handle.
To this end,
we propose our \algo/ algorithm.
Each agent $i$ maintains for each time instant $t$ an estimate $\xit$ of the
component $i$ of a minimum $\xtstar$ of problem~\eqref{eq:online_aggregative_problem}. In order to reconstruct the descent direction and use it to update the estimate $\xit$, agent $i$ needs to reconstruct the global information $\sum_{i=1}^{N}\frac{\phiit(\xit)}{N}$ and
$\sum_{i=1}^{N}\nabla_2
\fit\left(\xit,\sum_{j=1}^{N}\frac{\phi_{j,t}(x_{j,t})}{N}\right)$, which are not locally available. To overcome this lack of information, agent $i$ maintains auxiliary variables $\sit$ and $\yit$ and iteratively updates them according to a perturbed consensus mechanism.
A pseudo-code of the \algo/ algorithm is reported in Algorithm~\ref{table:constrained_aggregative_GT}
from the perspective of agent $i$, in which $\alpha$ is a positive constant
step-size, $\delta \in (0,1)$ is a constant algorithm parameter, and each
element $a_{ij}$ represents the $(i,j)$ entry of the weighted adjacency
matrix $\mathcal{A}$ of the network.
\begin{algorithm}%
	\begin{algorithmic}
		\State initialization:
		\begin{align*}
		  x_{i,0} \in X_{i,0}, \:\:\:\: s_{i,0} = \phi_{i,0}(x_{i,0}), \:\:\:\: y_{i,0} = \nabla_2f_{i,0}(x_{i,0},s_{i,0})
		\end{align*}
		\For{$t=0, 1, \dots$}
		\begin{align*}
		\txik & = P_{\Xit} \big[\xit - \alpha(\nabla_1 \fit(\xit,\sit) + \nabla\phiit(\xit)\yit)\big]%
		\\
		\xitp & = \xit + \delta(\txik - \xit)%
		\\
		\sitp & = \sum_{j=1}^{N}a_{ij}s_{j,t} + \phiitp(\xitp) - \phiit(\xit)
		\\
		\yitp & = \sum_{j=1}^{N}a_{ij}y_{j,t} + \nabla_2 \fitp(\xitp, \sitp)
		\\
		&\hspace{1.84cm}
		- \nabla_2 \fit(\xit,\sit)%
		\end{align*}
		\EndFor
	\end{algorithmic}
	\caption{\algo/ (Agent $i$)}
	\label{table:constrained_aggregative_GT}
\end{algorithm}

\section{Convergence Analysis}
\label{sec:analysis}

This section gives the convergence properties of the proposed distributed algorithm.

\subsection{\algo/ Reformulation and Assumptions}
First of all let us rewrite all the agents' updates, according to
Algorithm~\ref{table:constrained_aggregative_GT}, in a stacked vector form as
\begin{subequations}
\begin{align}
\txt &= P_{\Xt}[\xt - \alpha(\nabla_1\ft(\xt,\st) + \nabla\phit(\xt)\yt)]\label{eq:tilde_gloabl_update}\\
\xtp &= \xt + \delta(\txt - \xt)\label{eq:x_global_update}\\
\stp &= A\st + \phitp(\xtp) - \phit(\xt)\label{eq:sigma_global_update}\\
\ytp &= A\yt + \nabla_2 \ftp(\xtp, \stp) - \nabla_2\ft(\xt,\st),\label{eq:y_global_update}
\end{align}%
\end{subequations}%
where we used the notation $\xt := \col(x_{1,t}, \dots, x_{N,t})$,
  $\st := \col(s_{1,t}, \dots, s_{N,t})$, and
  $\yt := \col(y_{1,t}, \dots, y_{N,t})$. Moreover, we also introduced the symbols
  $\nabla_2 \ft(\xt,\st) := \col(\nabla_2
  f_{1,t}(x_{1,t},s_{1,t}),\dots, \nabla_2 f_{N,t}(x_{N,t},s_{N,t}))$,
  $\nabla \phit(\xt) :=
  \blkdiag(\nabla\phi_{1,t}(x_{1,t}),\dots,\nabla\phi_{N,t}(x_{N,t}))$ and
  $A := \mathcal{A} \otimes I$.
In order to perform the convergence analysis, we derive bounds for the quantities
$\norm{\xtp - \xt}$, $\norm{\xtp - \xtpstar}$, $\norm{\ytp - \1\ytpavg}$, and
$\norm{\stp - \1\stpavg}$, in which $\ytavg := \frac{1}{N}\sum_{i=1}^N\yit$ and
$\stavg := \frac{1}{N}\sum_{i=1}^N\sit$ denote the mean vectors of $\yt$ and $\st$,
respectively. Let $z_t$ be the vector staking the above quantities %
\begin{align}
	z_t := \begin{bmatrix}
		\norm{\xt - \xtstar}
		\\
		\norm{\st - \1\stavg}
		\\
		\norm{\yt - \1\ytavg}
	\end{bmatrix}.\label{eq:zt}
\end{align}
	Moreover, also the following variables will be useful to provide the main result of the paper, namely
	\begin{subequations}\label{eq:bounds_variables}
	\begin{align}
		\eta_{t}&:= \sup_{x \in \R^n, z \in \R^{Nd}} \norm{\nabla_2\ftp(x,z) - \nabla_2 \ft(x,z)}\label{eq:eta}
		\\
		\omega_{t}&:= \sup_{x \in \R^n} \norm{\phitp(x) - \phit(x)}\label{eq:omega}
		\\
		\gamma_t &:= \sup_{x \in \R^{n}} \left|\dist(x,\Xtp) - \dist(x,\Xt)\right|\label{eq:gamma}
		\\
		\zeta_t &:=\|\xtpstar-\xtstar\|,\label{eq:zeta}
	\end{align}
	\end{subequations}	
	where we recall that $\xtstar$ is the optimal solution of $\ft(\xt,\sigma_t(\xt))$.
Next, we state the assumptions of our framework.

\begin{assumption}[Communication graph]\label{ass:network}
  The graph $\mathcal{G}$ is undirected and connected and $\mathcal{A}$ is doubly stochastic.
  \oprocend
\end{assumption}
\begin{assumption}[Convexity]\label{ass:convexity}
	For all $i \in \set$ and all $t \ge 0$, $\Xit \subseteq \R^{n_i}$ is nonempty, closed and convex, while the global objective function $\ft(x,\sigma_t(x))$ is $\mu$-strongly convex.	\oprocend
\end{assumption}
\begin{assumption}[Function Regularity]\label{ass:lipschitz}
For all $t \ge 0$, the function $\ft(x,\sigma_t(x))$ is differentiable with $L_1$-Lipschitz continuous gradients,
and %
$G_t(x,s)$, $\nabla_2 \ft(x,s)$ are Lipschitz continuous %
with constants $L_1, L_2 > 0$, respectively.
            For all $i \in \set$ and $t \ge 0$, the aggregation function $\phiit(x_i)$ is differentiable
        	and $L_3$-Lipschitz continuous, and $\eta_t$ and $\omega_t$ are finite.
	\oprocend
\end{assumption}
We start by noting that
\begin{subequations}
\begin{align}
\stpavg &= \stavg + \frac{\1\T}{N}(\phitp(\xtp) - \phit(\xt))\label{eq:sigma_mean_update}
\\
\ytpavg &= \ytavg \! + \! \frac{\1\T}{N}(\nabla_2\ftp(\xtp,\stp) \! - \! \nabla_2\ft(\xt,\st)).\label{eq:y_mean_update}
\end{align}
\end{subequations}
Then, if we initialize $\sigma$ and $y$ as $\sigma_0 := \phi_0(x_0)$ and
$y_0 := \nabla_2f_0(x_0,s_0)$, from~\eqref{eq:sigma_mean_update}
and~\eqref{eq:y_mean_update}, it holds for all $t \ge 0$
\begin{subequations}
\begin{align}
\stavg &= \frac{1}{N}\sum_{i=1}^{N}\phit(\xit) := \sigma_t(\xt)
\label{eq:s_mean}
\\
\ytavg &= \frac{1}{N}\sum_{i=1}^{N}\nabla_2\fit(\xit,\sit)\label{eq:y_mean}.
\end{align}
\end{subequations}

\subsection{Preparatory Lemmas}

Here we present four preparatory Lemmas that we need to prove Theorem~\ref{th:dyn_regret}. For brevity, we will use $\dk$ to denote the descent direction used within the update~\eqref{eq:tilde_gloabl_update}, i.e.,
\begin{align}
	\dk &:= \nabla_1 \ft(\xt,\st) + \nabla\phit(\xt)\yt.\label{eq:dk}
\end{align}
\begin{lemma}\label{lemma:xk_xstar}
	Let Assumptions~\ref{ass:network},~\ref{ass:convexity}, and~\ref{ass:lipschitz} hold. If $\alpha \leq \frac{1}{L_1}$, then %
	\begin{align*}
	\norm{\xtp - \xtpstar} &\leq(1-\delta\mu\alpha)\norm{\xt-\xtstar} + \delta\alpha L_1\norm{\st-\1\stavg}\\
	& \hspace{0.5cm}+ \delta\alpha L_3\norm{\yt - \1\ytavg} + \zeta_t.
	\end{align*}
\end{lemma}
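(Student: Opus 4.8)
The plan is to first control the distance to the \emph{current} minimizer $\xkstar$ and only at the end pass to the \emph{next} minimizer $\xkpstar$, absorbing the gap into the constant $\zeta$ via $\norm{\xkstar - \xkpstar} \le \zeta$ from Assumption~\ref{ass:bdd_var}. First I would expand the convex-combination update~\eqref{eq:x_global_update} as
\begin{equation*}
\xkp - \xkstar = (1-\delta)(\xk - \xkstar) + \delta(\txk - \xkstar),
\end{equation*}
so that $\norm{\xkp - \xkstar} \le (1-\delta)\norm{\xk - \xkstar} + \delta\norm{\txk - \xkstar}$ and everything reduces to bounding $\norm{\txk - \xkstar}$.

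To bound $\norm{\txk-\xkstar}$ I would use the first-order optimality of $\xkstar$. Since $\fk(\cdot,\1\sigma_t(\cdot))$ is $\mu$-strongly convex (Assumption~\ref{ass:convexity}) and $\Xk$ is closed and convex (a product of the closed convex sets $X_i$ intersected with the convex sublevel sets of the $\hik$), the point $\xkstar$ is characterized by the fixed-point relation $\xkstar = P_{\Xk}[\xkstar - \alpha\nabla\fk(\xkstar,\1\sigma_t(\xkstar))]$ for any $\alpha>0$. Writing $\txk = P_{\Xk}[\xk - \alpha\dk]$ from~\eqref{eq:tilde_gloabl_update}, with $\dk := \nabla_1\fk(\xk,\sk) + \nabla\phik(\xk)\yk$, invoking non-expansiveness of the projection, and then adding and subtracting $\alpha\nabla\fk(\xk,\1\sigma_t(\xk))$, splits the estimate into
\begin{align*}
\norm{\txk - \xkstar} &\le \norm{\big(\xk - \alpha\nabla\fk(\xk,\1\sigma_t(\xk))\big) - \big(\xkstar - \alpha\nabla\fk(\xkstar,\1\sigma_t(\xkstar))\big)}\\
&\quad + \alpha\norm{\dk - \nabla\fk(\xk,\1\sigma_t(\xk))}.
\end{align*}

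For the first (gradient-map) term, $\mu$-strong convexity together with the $L_1$-Lipschitz continuity of $\nabla\fk(\cdot,\1\sigma_t(\cdot))$ (Assumption~\ref{ass:lipschitz}) makes the map $x \mapsto x - \alpha\nabla\fk(x,\1\sigma_t(x))$ a contraction of modulus $(1-\alpha\mu)$ as soon as $\alpha \le 1/L_1$, so this term is at most $(1-\alpha\mu)\norm{\xk - \xkstar}$. The second (mismatch) term is where the tracking machinery enters. Using the initialization together with the identities~\eqref{eq:s_mean}--\eqref{eq:y_mean}, one has $\skavg = \sigma_t(\xk)$ and $\ykavg = \frac{1}{N}\sum_i \nabla_2\fik(\xik,\sik)$, which lets me introduce the map $G_t(x,y) := \nabla_1\fk(x,y) + \nabla\phik(x)\1\frac{1}{N}\sum_i\nabla_2\fik(x_i,y_i)$ --- exactly the $L_1$-Lipschitz function of Assumption~\ref{ass:lipschitz} --- and recognize that $\nabla\fk(\xk,\1\sigma_t(\xk)) = G_t(\xk,\1\skavg)$ while $\dk = G_t(\xk,\sk) + \nabla\phik(\xk)(\yk - \1\ykavg)$. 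Hence
\begin{equation*}
\dk - \nabla\fk(\xk,\1\sigma_t(\xk)) = \big(G_t(\xk,\sk) - G_t(\xk,\1\skavg)\big) + \nabla\phik(\xk)(\yk - \1\ykavg),
\end{equation*}
and the $L_1$-Lipschitzness of $G_t$ (applied with identical first arguments) bounds the first summand by $L_1\norm{\sk - \1\skavg}$, while the block-diagonal Jacobian $\nabla\phik(\xk)$ has operator norm at most $L_3$ (the $\phiik$ being $L_3$-Lipschitz), bounding the second by $L_3\norm{\yk - \1\ykavg}$.

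Collecting these bounds gives $\norm{\txk - \xkstar} \le (1-\alpha\mu)\norm{\xk - \xkstar} + \alpha L_1\norm{\sk - \1\skavg} + \alpha L_3\norm{\yk - \1\ykavg}$; substituting into the convex combination and simplifying $(1-\delta) + \delta(1-\alpha\mu) = 1-\delta\mu\alpha$ yields the bound on $\norm{\xkp - \xkstar}$, after which the triangle inequality with $\norm{\xkstar - \xkpstar}\le\zeta$ delivers the claim. I expect the crux to be the mismatch term: the nontrivial observation is that the tracking identities collapse the estimation error of $\dk$ into precisely the two consensus errors $\norm{\sk - \1\skavg}$ and $\norm{\yk - \1\ykavg}$, with the $\nabla_2$-contributions folded into the single $L_1$-Lipschitz constant of $G_t$ and no residual gradient-magnitude term surviving --- which is exactly what allows the bounded-gradient assumption to be dropped.
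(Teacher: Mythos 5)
Your proposal is correct and follows essentially the same route as the paper's proof: the fixed-point characterization of $\xkstar$ under the projection, non-expansiveness, the contraction of the gradient map via Lemma~\ref{lemma:convexity}, and the splitting of $\dk - \dkavg$ into the two consensus errors using the tracking identities~\eqref{eq:s_mean}--\eqref{eq:y_mean} and the $L_1$-Lipschitzness of the composite map in Assumption~\ref{ass:lipschitz}. The only (immaterial) difference is that you invoke the triangle inequality with $\|\xkpstar - \xkstar\| \le \zeta$ at the end rather than at the start.
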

\begin{proof}
The proof is provided in Appendix~\ref{sec:proof_xk_xkstar}.
\end{proof}
\begin{lemma}\label{lemma:xkp_xk}
	Let Assumptions~\ref{ass:network},~\ref{ass:convexity}, and~\ref{ass:lipschitz} hold. Then
	\begin{align*}
	\norm{\xtp - \xt} &\leq \delta(2 + \alpha L_1 + \alpha L_1L_3)\norm{\xt -\xtstar}\notag\\
	 &\hspace{0.5cm}+ \delta\alpha L_1\norm{\st-\1\stavg}+\delta\alpha L_3\norm{\yt - \1\ytavg}.
	\end{align*}
\end{lemma}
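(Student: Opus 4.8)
The plan is to reduce the increment $\norm{\xkp-\xk}$ to the projected step $\txk$ and then to reuse, almost verbatim, the one-step estimates already needed for Lemma~\ref{lemma:xk_xstar}; the only structural difference is that here the optimization error is controlled by a plain triangle inequality instead of a strong-convexity contraction, which is exactly why no restriction on $\alpha$ is required. First, the convex combination~\eqref{eq:x_global_update} gives $\xkp-\xk=\delta(\txk-\xk)$, hence $\norm{\xkp-\xk}=\delta\norm{\txk-\xk}$. Inserting the minimizer and using the triangle inequality,
\begin{align*}
\norm{\txk-\xk}\le\norm{\txk-\xkstar}+\norm{\xk-\xkstar},
\end{align*}
the second term already supplies the ``$1$'' in the target coefficient $2+\alpha L_1+\alpha L_1L_3$, so it remains to bound $\norm{\txk-\xkstar}$.

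For this I would combine the nonexpansiveness of $P_{\Xk}[\cdot]$ with the fixed-point characterization of the constrained minimizer, $\xkstar=P_{\Xk}[\xkstar-\alpha\dkstar]$, where $\dkstar$ is the exact gradient of $\fk(\cdot,\1\sigma_t(\cdot))$ at $\xkstar$. With $\dk$ the search direction of~\eqref{eq:tilde_gloabl_update}, this yields
\begin{align*}
\norm{\txk-\xkstar}\le\norm{(\xk-\xkstar)-\alpha(\dk-\dkstar)}\le\norm{\xk-\xkstar}+\alpha\norm{\dk-\dkstar}.
\end{align*}
Here, unlike in Lemma~\ref{lemma:xk_xstar}, I do not retain the contraction factor $1-\mu\alpha$: using the bare triangle inequality is precisely what makes the estimate hold for every $\alpha>0$.

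The remaining work is the bound on the direction error $\norm{\dk-\dkstar}$, which I would split through the exact gradient $\hat g_t$ of $\fk(\cdot,\1\sigma_t(\cdot))$ at the current iterate as $\dk-\dkstar=(\dk-\hat g_t)+(\hat g_t-\dkstar)$. For the optimization part, the joint $L_1$-Lipschitz map of Assumption~\ref{ass:lipschitz}, evaluated at $(\xk,\1\sigma_t(\xk))$ and $(\xkstar,\1\sigma_t(\xkstar))$, gives $\norm{\hat g_t-\dkstar}\le L_1(\norm{\xk-\xkstar}+\norm{\1(\sigma_t(\xk)-\sigma_t(\xkstar))})$; bounding the aggregative increment through the $L_3$-Lipschitzness of the $\phiik$ (where $\norm{\1 v}=\sqrt N\norm{v}$ absorbs the $1/N$ in $\sigma_t$) produces $\norm{\hat g_t-\dkstar}\le(L_1+L_1L_3)\norm{\xk-\xkstar}$, i.e.\ exactly the missing part of the coefficient. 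For the tracking part, the consensus identities~\eqref{eq:s_mean}--\eqref{eq:y_mean} identify $\skavg$ with $\sigma_t(\xk)$ and $\ykavg$ with the true gradient-sum, so $\dk-\hat g_t$ collapses to the consensus errors and is bounded, as in Lemma~\ref{lemma:xk_xstar}, by $L_1\norm{\sk-\1\skavg}+L_3\norm{\yk-\1\ykavg}$. Assembling these estimates and multiplying through by $\delta$ gives the statement.

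The main obstacle is the bookkeeping in the tracking part $\dk-\hat g_t$: since each $\yik$ tracks the gradient-sum evaluated at the local $\sik$ rather than at the consensual $\skavg$, a direct split leaves the cross term $\tfrac1N\sum_i(\nabla_2\fik(\xik,\sik)-\nabla_2\fik(\xik,\skavg))$, which must be reabsorbed into the $\norm{\sk-\1\skavg}$ term via $L_2$-Lipschitzness and Cauchy--Schwarz. Routing the argument through a hybrid direction that uses $\sk$ in the first-argument gradient but the consensual value $\skavg$ in the aggregated second-argument gradient is what keeps this clean and makes the consensus-error coefficients coincide with those of Lemma~\ref{lemma:xk_xstar}; everything else reduces to triangle inequalities and the Lipschitz constants already introduced.
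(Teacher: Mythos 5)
Your proposal is correct and follows essentially the same route as the paper: reduce to $\delta\norm{\txk-\xk}$, insert the fixed point $\xkstar=P_{\Xk}[\xkstar-\alpha\dkstar]$, use non-expansiveness and plain triangle inequalities (no contraction, hence no condition on $\alpha$), and bound $\norm{\dk-\dkstar}$ via the hybrid $L_1$-Lipschitz map together with $\norm{\nabla\phik}\le L_3$ and the tracking identity $\ykavg=\frac{1}{N}\sum_i\nabla_2\fik(\xik,\sik)$. The only (immaterial) difference is the placement of the intermediate point --- you pass through the exact gradient $\dkavg$ at $(\xk,\1\skavg)$ and absorb the $\sigma_t(\xk)-\sigma_t(\xkstar)$ term there, whereas the paper goes directly to $(\xkstar,\1\sigma_t^\star)$ and then splits $\norm{\sk-\1\sigma_t^\star}$ by adding and subtracting $\1\skavg$; note also that the intermediate direction that avoids any $L_2$ cross term is the one evaluating \emph{both} gradient arguments at the local $\sik$ (so that it differs from $\dk$ exactly by $\nabla\phik(\xk)(\yk-\1\ykavg)$), not the one you describe in your closing paragraph.
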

\begin{proof}
	The proof is provided in Appendix~\ref{sec:proof_xkp_xk}.
\end{proof}
\begin{lemma}\label{lemma:sk_skavg}
	Let Assumptions~\ref{ass:network},~\ref{ass:convexity}, and~\ref{ass:lipschitz} hold. Then
	\begin{align*}
	\norm{\stp-\1\stpavg} &\leq \Lambda\norm{\st - \1\stavg} + \delta\alpha L_1L_3\norm{\st-\1\stavg}\notag
	\\
	&\hspace{0.5cm}+\delta(2 L_3 + \alpha L_1L_3 + \alpha L_1L_3^2)\norm{\xt -\xtstar}\notag
	\\
	&\hspace{0.5cm}+\delta\alpha L_3^2\norm{\yt - \1\ytavg}  + \omega_t,
	\end{align*}
	where $\Lambda$ is the maximum eigenvalue of the matrix $A - \I$.
\end{lemma}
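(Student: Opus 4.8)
The plan is to start from the tracker dynamics and isolate the consensus error. Subtracting $\1\skpavg$ from $\skp$ and using~\eqref{eq:sigma_global_update} together with the mean recursion~\eqref{eq:sigma_mean_update}, I would write
\begin{align*}
\skp - \1\skpavg &= (A - \I)\sk + (I - \I)(\phikp(\xkp) - \phik(\xk)).
\end{align*}
The analysis then naturally splits into the contraction of the consensus matrix acting on $\sk$ and the residual perturbation injected by the aggregation update.

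For the first term I would invoke Assumption~\ref{ass:network}: since $\mathcal{A}$ is doubly stochastic we have $A\1 = \1$ and $\1^\top A = \1^\top$, hence $(A - \I)\1 = 0$ and therefore $(A - \I)\sk = (A - \I)(\sk - \1\skavg)$, where the constant component has been annihilated. Because the graph is undirected and connected, the matrix $\mathcal{A} - \frac{1_N 1_N^\top}{N}$ is symmetric with spectral radius $\rho < 1$, which gives $\norm{(A - \I)(\sk - \1\skavg)} \leq \rho\norm{\sk - \1\skavg}$. This is precisely where the factor $\rho$ enters, and I expect this spectral estimate---in particular justifying $\rho < 1$ from connectedness---to be the main technical point; everything afterward is triangle inequality and Lipschitz bookkeeping.

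For the second term I would use $\norm{I - \I} \leq 1$ to discard the projection and then split $\phikp(\xkp) - \phik(\xk)$ by adding and subtracting $\phik(\xkp)$. The time-variation part obeys $\norm{\phikp(\xkp) - \phik(\xkp)} \leq \sqrt{N}\omega$ by the $\omega$-bound in Assumption~\ref{ass:bdd_var}, obtained by summing the per-agent bound over the $N$ stacked components, while the increment part is controlled by the $L_3$-Lipschitz continuity of $\phik$ from Assumption~\ref{ass:lipschitz}, giving $\norm{\phik(\xkp) - \phik(\xk)} \leq L_3\norm{\xkp - \xk}$. Combining these estimates yields
\begin{align*}
\norm{\skp - \1\skpavg} &\leq \rho\norm{\sk - \1\skavg} + L_3\norm{\xkp - \xk} + \sqrt{N}\omega.
\end{align*}

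Finally, I would substitute the bound on $\norm{\xkp - \xk}$ established in Lemma~\ref{lemma:xkp_xk}. Multiplying that estimate by $L_3$ produces the term $\delta(2L_3 + \alpha L_1 L_3 + \alpha L_1 L_3^2)\norm{\xk - \xkstar}$ together with $\delta\alpha L_1 L_3 \norm{\sk - \1\skavg}$ and $\delta\alpha L_3^2 \norm{\yk - \1\ykavg}$; adding the remaining $\rho\norm{\sk - \1\skavg}$ and $\sqrt{N}\omega$ reproduces exactly the claimed inequality.
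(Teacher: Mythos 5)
Your proposal is correct and follows essentially the same route as the paper: the same decomposition $(A-\I)(\sk-\1\skavg) + (I-\I)(\phikp(\xkp)-\phik(\xk))$, the same spectral bound $\rho<1$, the same Lipschitz/bounded-variation split of the aggregation increment (you insert $\phik(\xkp)$ where the paper inserts $\phikp(\xk)$, an immaterial difference), and the same final substitution of Lemma~\ref{lemma:xkp_xk}. No gaps.
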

\begin{proof}
	The proof is provided in Appendix~\ref{sec:proof_sk_skavg}.
\end{proof}
\begin{lemma}\label{lemma:yk_ykavg}
	Let Assumptions~\ref{ass:network},~\ref{ass:convexity}, and~\ref{ass:lipschitz} hold. Then
	\begin{align}
	&\norm{\ytp - \1\ytpavg} \leq \Lambda\norm{\yt-\1\ytavg}\notag
	\\
	& \hspace{0.3cm}
	+ \delta\alpha L_3(L_2 + L_2L_3)\norm{\yt - \1\ytavg}\notag
	\\
	& \hspace{0.3cm}+ \delta(2 + \alpha L_1 + \alpha L_1)(L_2 + L_2L_3)\norm{\xt -\xtstar}\notag
	\\
	&\hspace{0.3cm}+ \delta\alpha L_1(L_2 + L_2L_3)\norm{\st-\1\stavg}\notag
	\\
	&\hspace{0.3cm}
	+ 2L_2\norm{\st-\1\stavg} + L_2\omega_t + \eta_t,\notag
	\end{align}
	where $\Lambda$ is the maximum eigenvalue of the matrix $A - \I$.
\end{lemma}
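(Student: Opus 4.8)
The plan is to start from the identity $\ykp - \1\ykpavg = (I - \I)\ykp$, which holds because $\1\ykpavg = \I\ykp$ (recall $\I := \frac{\1\1^\top}{N}$ and $\ykpavg = \frac{1}{N}\1^\top\ykp$ by~\eqref{eq:y_mean_update}). Substituting the stacked update~\eqref{eq:y_global_update} gives
\begin{align*}
\ykp - \1\ykpavg = (I - \I)A\yk + (I - \I)\big(\nabla_2\fkp(\xkp,\skp) - \nabla_2\fk(\xk,\sk)\big).
\end{align*}
First I would treat the consensus term. Using Assumption~\ref{ass:network} ($A\1 = \1$, $\1^\top A = \1^\top$) one checks the commutation identities $\I A = A\I = \I$, so $(I - \I)A = A - \I$, and moreover $(A-\I)\1 = 0$. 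Hence $(I-\I)A\yk = (A-\I)\yk = (A-\I)(\yk - \1\ykavg)$, and defining $\rho := \norm{A - \I} \in (0,1)$ (the contraction factor guaranteed by connectedness and double stochasticity) this term is bounded by $\rho\norm{\yk - \1\ykavg}$, producing the leading contraction.

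Next I would bound the gradient-difference term. Since $\norm{I - \I} \le 1$, it suffices to bound $\norm{\nabla_2\fkp(\xkp,\skp) - \nabla_2\fk(\xk,\sk)}$, which I would split into a time-variation part and an argument-variation part by adding and subtracting $\nabla_2\fk(\xkp,\skp)$. The first part is controlled by the component-wise bound $\eta_t \le \eta$ of Assumption~\ref{ass:bdd_var}, giving $\sqrt{N}\eta$ after stacking the $N$ agents; the second is controlled by the $L_2$-Lipschitz continuity of Assumption~\ref{ass:lipschitz}, giving $L_2\norm{\col(\xkp-\xk,\skp-\sk)} \le L_2(\norm{\xkp-\xk} + \norm{\skp-\sk})$.

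The remaining work is to expand $\norm{\skp - \sk}$ and $\norm{\xkp - \xk}$. From the $\sigma$-update~\eqref{eq:sigma_global_update}, $\skp - \sk = (A - I)\sk + \phikp(\xkp) - \phik(\xk)$; arguing as in the consensus step, $(A-I)\1\skavg = 0$ so $(A-I)\sk = (A-I)(\sk - \1\skavg)$, whence $\norm{(A-I)\sk} \le 2\norm{\sk-\1\skavg}$ because the eigenvalues of $\mathcal{A}$ lie in $[-1,1]$ and thus $\norm{A-I}\le 2$. The $\phi$-term splits via $\omega_t \le \omega$ and the $L_3$-Lipschitz property of Assumption~\ref{ass:lipschitz} into $\sqrt{N}\omega + L_3\norm{\xkp - \xk}$. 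Substituting, the coefficient of $\norm{\xkp-\xk}$ becomes $L_2(1 + L_3) = L_2 + L_2L_3$; inserting the bound on $\norm{\xkp - \xk}$ from Lemma~\ref{lemma:xkp_xk} and collecting terms reproduces the stated inequality, with the constant inputs $L_2\sqrt{N}\omega + \sqrt{N}\eta$ and the $2L_2\norm{\sk-\1\skavg}$ term coming directly from the $(A-I)\sk$ estimate.

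I expect the main obstacle to be the algebra surrounding the averaging matrix: establishing that $A$ acts as a contraction with factor $\rho$ on the disagreement vectors requires the identities $\I A = A\I = \I$ together with the observation that $\yk - \1\ykavg$ and $\sk - \1\skavg$ lie in the range of $I - \I$, and separately recognizing that $A - I$ (rather than $A - \I$) only admits the weaker bound $2$ on that subspace, which is precisely what yields the $2L_2$ coefficient rather than a $\rho$-scaled one. Everything else is bookkeeping: carefully tracking which Lipschitz constant and which variation bound attaches to each term, and feeding in Lemma~\ref{lemma:xkp_xk} at the end.
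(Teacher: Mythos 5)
Your proposal is correct and follows essentially the same route as the paper's proof: contract the disagreement with $\rho = \|A-\I\|$ using $\1 \in \ker(A-\I)$, split the gradient increment into a time-variation part (bounded by $\sqrt{N}\eta$) and an argument-variation part (bounded via $L_2$), expand $\skp-\sk$ through~\eqref{eq:sigma_global_update} to produce the $2L_2\norm{\sk-\1\skavg}$ and $L_2\sqrt{N}\omega$ terms, and close with Lemma~\ref{lemma:xkp_xk}. The only cosmetic difference is the intermediate point chosen in the add-and-subtract step (you insert $\nabla_2\fk(\xkp,\skp)$, the paper inserts $\nabla_2\fkp(\xk,\sk)$), which is immaterial since the iterates remain in $X$.
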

\begin{proof}
	The proof is provided in Appendix~\ref{sec:proof_yk_ykavg}.
\end{proof}

\subsection{Regret analysis and linear rate in static set-up}
Now, we state the main theoretical results of the paper. Next theorem provides a bound on the dynamic regret %
of the iterates generated by the \algo/ distributed algorithm in the general, online set-up~\eqref{eq:online_aggregative_problem}.
\begin{theorem}\label{th:dyn_regret}
	Consider \algo/ as given in Algorithm~\ref{table:constrained_aggregative_GT}.
	Let Assumptions~\ref{ass:network},~\ref{ass:convexity}, and~\ref{ass:lipschitz} hold. Then, there exists $\lambda, \bar{\delta} > 0$ and $\tilde{\rho} \in (0,1)$ so that, if $\alpha \leq \frac{1}{L_1}$ and $\delta \in (0,\bar{\delta})$, it holds 
	\begin{equation}
		R_T \le \frac{L_1\lambda^2}{2}\left(\frac{\norm{z_0}^2}{1 - \tilde{\rho}^2}  + 2\norm{z_0} U_T + Q_T\right),\label{eq:dyn_regret_bound}
	\end{equation}
	where $R_T$ is defined as in~\eqref{eq:regret} and
	\begin{subequations}\label{eq:U_T_Q_T}
	\begin{align}
	U_T &:= \sum_{t=1}^T\sum_{k=0}^{t-1}\tilde{\rho}^{t+k}\bigg(\norm{\zeta_{t-k-1}} + 2\norm{\eta_{t-k-1}} 
	\notag\\
	&\hspace{3.6cm}
	+ (1+L_2)\norm{\omega_{t-k-1}}\bigg),
	\\
	Q_T &:= \sum_{t=1}^T\sum_{k=0}^{t-1}\tilde{\rho}^{2k}(\zeta_{t-k-1}^2 + 2\eta_{t-k-1}^2 + (1 + L_2)\omega_{t-k-1}^2).
	\end{align}
\end{subequations}
Moreover, if $\gamma_t$ (cf.~\eqref{eq:gamma}) is finite for all $t \ge 0$, then the constraint violation is bounded by 
\begin{align}\label{eq:constraint_violation}
	\sum_{t=1}^T \dist(x_t,\Xt) &\leq \frac{1}{1-(1-\delta)^T}\dist(x_0,X_0) 
	\notag\\
	&\hspace{0.5cm}
	+ \sum_{t=1}^T \sum_{k=0}^{t-1}(1-\delta)^k \gamma_{t-k-1}.
\end{align}
\end{theorem}
\begin{proof}
The proof is provided in Appendix~\ref{sec:proof_theorem}.
\end{proof}
Operatively, in order to choose an appropriate value of the parameter $\delta$, it is necessary to first estimate the upper bound $\bar{\delta}$. As it emerges from the proof of Theorem~\ref{th:dyn_regret}, this can be done as follows: (i) compute a matrix $M(\delta)$ (cf.~\eqref{eq:z}), which depends on the various problem constants and on $\delta$, (ii) compute $\bar{\delta}$ as the maximum value of $\delta$ such that all the eigenvalues of $M(\delta)$ are strictly in the unit circle.
		We observe that Theorem~\ref{th:dyn_regret} improves the dynamic regret bound provided in~\cite{li2020distributedOnline}, which demonstrates a bound of the type $O(T) + O(\sqrt{T}V_T)$ (where $V_T$ is a term capturing variations of the problem). The authors also show that there exists a particular, constant step-size that allows to tighten the first term to $O(\sqrt{T})$. However, the choice of the the step-size requires a prior knowledge of $T$ and $V_T$. In both cases, we improve the first term, which is replaced by the constant $\frac{L_1\lambda^2}{2}\frac{\norm{z_0}^2}{1 - \tilde{\rho}^2}$, while in our terms $U_T$ and $Q_T$ (cf.~\eqref{eq:U_T_Q_T}) the variations of the problem are scaled by $\tilde{\rho}^t$, i.e., an exponentially decaying quantity since $\tilde{\rho} \in (0,1)$.
\begin{remark}[Average Regret]
	\label{rem:avg_regret}
	Let us consider the case in which the problem variations are bounded by a constant, i.e., suppose there exists $C > 0$ so that $\zeta_t, \eta_t, \omega_t \leq C$ for all $t \ge 0$. In this case, by using the definitions of $U_T$ and $Q_T$ (cf.~\eqref{eq:U_T_Q_T}) and recalling that $\tilde{\rho} \in (0,1)$, we can use the geometric series property to get
	\begin{align*}
		U_T &\leq \frac{(4+L_2)C}{1 - \tilde{\rho}^T},
		\hspace{0.5cm}\text{and}\hspace{0.5cm}
		Q_T &\leq \frac{(3+(1+L_2))C^2T}{1 -\tilde{\rho}^{2}}.
	\end{align*}
	In this case, the average regret approaches a constant value,
	\begin{align*}
		\lim_{T \to \infty} R_T/T = \frac{L_1\lambda^2(4+L_2)C^2}{2(1 - \tilde{\rho}^2)^2}.
		\eqoprocend
	\end{align*}
\end{remark}
\begin{remark}[Inequality constraints]
	Consider the case in which $\Xit$ can be expressed in terms of inequality constraints, namely
	\begin{align*}
		\Xit := \{x_i \in \R^{n_i} \mid h_{i,t}(x_i) \le 0_{m_i}\},
	\end{align*}
	with $h_{i,t}: \R^{n_i} \to \R^{m_i}$ for all $i \in \set$ and $t \ge 0$.
	In this case, in place of the distance function $\dist(x, \Xt)$, one can use
	$\|[h_t(x)]^+\|$ as a metric to characterize the constraint violation,
	where $h_t(x) := \col(h_{1,t}(x_1),\dots,h_{N,t}(x_N))$. By repeating
	similar arguments as in the proof of Theorem~\ref{th:dyn_regret},
	one obtains similarly that $\sum_{t=1}^T \|[h_t(x_t)]^+\| \leq
	\frac{1}{1-(1-\delta)^T}\|[h_t(x_0)]^+\| 
		+ \sum_{t=1}^T \sum_{k=0}^{t-1}(1-\delta)^k \gamma_{t-k-1}$.
	\oprocend
\end{remark}
In the following corollary, we assess that in the static case
the \algo/ distributed algorithm converges to the (fixed) optimal
solution $x^\star$ with a linear rate.\footnote{%
A sequence $\{x_t\}$ converges linearly to $\bar{x}$
if there exists a number $\eta \in (0,1)$ such that
$\frac{\| x^{t+1} - \bar{x} \|}{\| x^{t} - \bar{x} \|} \to \eta$ as $t \to \infty$.}
\begin{corollary}[Static set-up]\label{cor:static}
	Under the same assumptions of Theorem~\ref{th:dyn_regret}, if it holds $\ft = f$, $\phit = \phi$, and $\Xit = X_i$ for all $i \in \{1, \ldots, N\}$ and all $t \ge 0$, then there exists $\lambda, \bar{\delta} > 0$ and $\tilde{\rho} \in (0,1)$ so that, if $\alpha \leq \frac{1}{L_1}$ and $\delta \in (0,\bar{\delta})$, it holds
	  \begin{equation*}
		f(\xt,\sigma(\xt)) - f(x^\star,\sigma(x^\star)) \le \tilde{\rho}^{2t}\frac{L_1 \lambda^2}{2} \norm{z_0}^2.
	  \end{equation*}
  \end{corollary}
\begin{proof}
The proof is provided in Appendix~\ref{sec:proof_cor_stat}.
\end{proof}
\section{Numerical experiments}%
\label{sec:numerical_experiments}

In this section we show the effectiveness of \algo/ on a multi-robot surveillance scenario.

\subsubsection{Online set-up}
\label{subsec:dynamic_case}
Let us consider a network of cooperating robots that aim to protect a
target with location $b_t \in \R^2$ at time $t$ from some intruders.
The optimization variables $\xit \in \R^2$
represent the position of robots at each time $t$ and each robot $i$ is able to move from
$\xit$ to $\xitp$ using a local controller.
We associate to each robot $i$ an intruder located at $p_{i,t} \in \R^2$
at time $t$. The dynamic protection strategy applied by each robot consists
of staying simultaneously close to the protected target and to the associated
intruder. Meanwhile, the whole team of robots tries to keep its
weighted center of mass rotating close to the target. A concept of this scenario is given in Fig.~\eqref{fig:concept}.
	\begin{figure}
		\centering
	\includegraphics[scale=1]{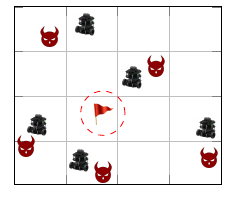}
	\caption{Multi- robot surveillance scenario - Robot icons denote agents, devil icons denote intruders, while the flag is the target to be protected.}
	\label{fig:concept}
	\end{figure}
This strategy is obtained by solving
problem~\eqref{eq:online_aggregative_problem} with the cost functions
$\fit(x_i,\sigma_t(x)) = \frac{1}{2}\norm{x_i - p_{i,t}}^2+ \frac{\gamma_1}{2}\norm{x_i - b_{t}}^2 + \frac{\gamma_2}{2N}\norm{\sigma_t(x) - b_t}^2$,
with $\gamma_1 = 1$, $\gamma_2 = 10$ and the aggregation rules $\phiit(x_i) = \beta_i x_i + a_{t}$, where $\beta_i > 0$ and $a_{t} \in \R^2$ represents a time-varying offset which follows the law $a_{t} = r\col(\cos(t/(2\pi\tau)),\sin(t/(2\pi\tau)))$ for some $r, \tau  > 0$. In this way, the center of mass $\frac{1}{N}\sum_{i=1}^N x_i^t$ is forced to rotate around the target position $b_t$.

We address a scenario with $N = 50$ agents and intruders.
As regards the constraints, we consider
a common time-varying box $\Xit = \{x \in \R^2 \mid 0 \leq x \leq u_{t} \}$ for all $i$, where $u_{t} \in \R^2$ starts from $[20,20]$ and linearly increases at each iteration. In this way, the agents initially stay closer to the target and then they move toward the associated intruders.
Each intruder $i$ moves along a circle of radius $r = 1$ according to the law $p_{i,t} = p_{i,c} + r\col(\cos(t/100),\sin(t/100))$, where $p_{i,c} \in \R^2$ is randomly generated. The target $b_{t}$ and the offset $a_{t}$ follow similar laws. In this setup, being the sinusoidal functions bounded, the constants $\eta_t$ and $\omega_t$ introduced in~\eqref{eq:bounds_variables} can be uniformly bounded as $\eta_t \leq \gamma_2\sqrt{N}r$ and $\omega_t \leq \sqrt{N}r$ for all $t \ge 0$. %
Moreover, the vector $u_t$ defining the box $\Xit$ changes linearly with respect time and, thus, also the constant $\gamma_t$ (cf.~\eqref{eq:gamma}) can be uniformly bounded.
As regards the algorithm parameters, we set $\alpha = 1$ and $\delta = 0.5$.
We performed $100$ Monte Carlo trials that differ in the problem parameters and agents' initial conditions. Fig.~\ref{fig:regret_avg} shows
that the behavior of the algorithm does not depend on the generated instances.
Indeed, the achieved average dynamic regret, as predicted in Remark~\ref{rem:avg_regret}, converges asymptotically to a constant.
\begin{figure}
	\centering
	\includegraphics[scale=0.75]{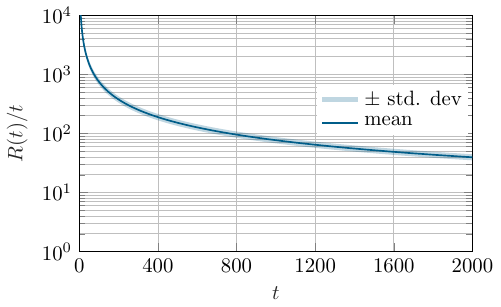}
	\caption{Online case -- Mean of the average Dynamic regret and $1$-standard deviation band
	over $100$ Monte Carlo trials.}
	\label{fig:regret_avg}
\end{figure}

\subsubsection{Static set-up}
Now we address a static instance of the problem. Namely, we
fix $\Xt$ and the positions of the intruders and of the target.
We perform a Monte Carlo simulation consisting of $100$ trials
on the same network of $N =50$ agents with the same algorithm parameters.
As predicted by Corollary~\ref{cor:static}, Fig.~\ref{fig:static}
shows an exponential decay of
$\frac{\norm{\xt - x^\star}}{\norm{x^\star}}$.%
\begin{figure}
	\centering
	\includegraphics[scale=0.75]{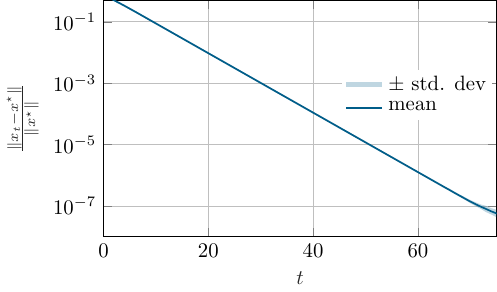}
	\caption{Static case --
	  Mean of the relative error and $1$-standard deviation band
	  obtained with $100$ Monte Carlo trials.}
	\label{fig:static}
\end{figure}

\section{Conclusions}	
\label{sec:conclusions}
In this paper, we focused on online instances of the distributed constrained aggregative optimization framework.
We proposed \algo/, a distributed algorithm allows for time-varying feasible sets and aggregation rules. We perform a regret analysis of the scheme by which we conclude that the dynamic regret is bounded by a constant term and a term related to time variations, while in the static case, the solution estimates linearly converge to the optimal solution.
Numerical computations confirmed our findings.
  
\begin{appendix}

\subsection{Proof of Lemma~\ref{lemma:xk_xstar}}
\label{sec:proof_xk_xkstar}
We begin by using~\eqref{eq:x_global_update}, which leads to
\begin{align}
&\norm{\xtp - \xtpstar} = \norm{\xt + \delta(\txt - \xt)- \xtpstar} \notag
\\
&\stackrel{(a)} \leq \norm{\xt + \delta(\txt - \xt)- \xtstar} +\norm{\xtpstar -\xtstar}\notag
\\
&\stackrel{(b)} \leq \norm{\xt + \delta(\txt - \xt)- \xtstar} + \zeta_t,\label{eq:before_fixed_point}
\end{align}
where in \emph{(a)} we add and subtract the term $\xtstar$ and use the triangle inequality,
and in \emph{(b)} we use $\zeta_t$ (cf~\eqref{eq:zeta}).
Being $\xtstar$ the minimizer of $\ft$ over $\Xt$,
then it holds %
$P_{\Xt}\left[\xtstar - \alpha\ft(\xtstar,\sigma_t(\xtstar))\right] = \xtstar$. %
Then, we add the null term $\delta\left(P_{\Xt}\left[\xtstar - \alpha\nabla\ft(\xtstar,\sigma_t(\xtstar))\right] - \xtstar\right)$
in the first norm of~\eqref{eq:before_fixed_point} and we apply the triangle inequality and~\eqref{eq:tilde_gloabl_update} to write
\begin{align}
&\norm{\xtp - \xtpstar}
\leq (1  -  \delta)\norm{\xt  - \xtstar}
\notag\\
&\hspace{0.5cm}
+  \delta\norm{P_{\Xt}[\xt  -  \alpha\dk]  -  P_{\Xt}\left[\xtstar  -  \alpha\nabla\ft(\xtstar,\sigma_t(\xtstar))\right]} 
+  \zeta_t
\notag\\
&\stackrel{(a)}{\leq}
(1  - \delta)\norm{\xt  - \xtstar} 
\notag\\
&\hspace{0.5cm}
+  \delta\norm{\xt  -  \alpha\dk  -  (\xtstar  -  \alpha\nabla\ft(\xtstar,\sigma_t(\xtstar)))}  +  \zeta_t,\label{eq:after_projection}
\end{align}
where \emph{(a)} uses the non-expansiveness of the projection, see~\cite{bertsekas2015convex}. Add and subtract within the second norm $\alpha\nabla\ft(\xt,\sigma_t(\xt))$ and apply the triangle inequality to rewrite~\eqref{eq:after_projection} as %
\begin{align}
&\norm{\xtp - \xtpstar} \leq(1-\delta)\norm{\xt-\xtstar} 
\notag\\
&
+ \delta\norm{\xt - \alpha\nabla\ft(\xt,\sigma_t(\xt)) - \left(\xtstar - \alpha\ft(\xtstar,\sigma_t(\xtstar))\right)} 
\notag\\
&\hspace{0.5cm}
+ \delta\alpha\norm{\dk - \nabla\ft(\xt,\sigma_t(\xt))} + \zeta_t
\notag\\
&\stackrel{(a)}\leq(1-\delta\mu\alpha)\norm{\xt-\xtstar} + \delta\alpha\norm{\dk - \nabla\ft(\xt,\sigma_t(\xt))} + \zeta_t,
\notag
\end{align} 
where \emph{(a)} uses~\cite[Lemma~3]{li2020distributed}. Add and subtract into the second norm $\nabla\phit(\xt)\1\frac{1}{N}\sum_{i=1}^{N}\nabla_2\fit(\xit,\sit)$, and rearrange as
\begin{align}
&\norm{\xtp - \xtpstar} \leq(1-\delta\mu\alpha)\norm{\xt-\xtstar}
\notag\\
&\hspace{0.5cm}
+ \delta\alpha\norm{G_t(\xt,\st) - \nabla f(\xt)}
\notag\\
&\hspace{0.5cm}+\delta\alpha\norm{\nabla\phit(\xt)\left(\yt -\1\frac{1}{N}\sum_{i=1}^{N}\nabla_2\fit(\xit,\sit)\right)} + \zeta_t
\notag\\
&\stackrel{(a)} = (1-\delta\mu\alpha)\norm{\xt-\xtstar}
+ \delta\alpha\norm{G_t(\xt,\st) - \nabla\ft(\xt,\sigma_t(\xt))}
\notag\\
&\hspace{0.5cm}+\delta\alpha\norm{\nabla\phit(\xt)\left(\yt -\1\ytavg\right)} + \zeta_t,\label{eq:xkp_xkpstar_before_lipschitz}
\end{align}
where in \emph{(a)} we use~\eqref{eq:y_mean}. %
Consider the term $\|G_t(\xt,\st) - \nabla\ft(\xt,\sigma_t(\xt))\|$. The definition of $G_t$ and~\eqref{eq:s_mean} give
\begin{align}
&\norm{G_t(\xt,\st) - \nabla\ft(\xt,\sigma_t(\xt))} 
\notag\\
&= \norm{G_t(\xt,\st) - \nabla \ft(\xt,\stavg)}
\stackrel{(a)}{\leq} 
L_1 \norm{\st - \1\stavg},\label{eq:dkavg_gk}
\end{align}
where \emph{(a)} uses the Lipschitz continuity of $G_t$ (cf. Assumption~\ref{ass:lipschitz}). The proof follows by~\eqref{eq:xkp_xkpstar_before_lipschitz},~\eqref{eq:dkavg_gk}, and $\norm{\nabla\phit(x)} \le L_3$ for all $x \in \R^n$ (which is derived from Assumption~\ref{ass:lipschitz}).	\oprocend

\subsection{Proof of Lemma~\ref{lemma:xkp_xk}}
\label{sec:proof_xkp_xk}
We can use~\eqref{eq:x_global_update} to write
\begin{align}
\norm{\xtp - \xt}  &= \norm{\xt + \delta(\txt - \xt) - \xt} 
\notag\\
&= \delta\norm{\txt - \xt}
\stackrel{(a)} =\delta\norm{P_{\Xt}[\xt - \alpha\dk] - \xt}\notag,
\end{align}
where in \emph{(a)} we have used the update~\eqref{eq:tilde_gloabl_update}. By adding the null quantity $\left(P_{\Xt}\left[\xtstar - \alpha\nabla\ft(\xtstar,\sigma_t(\xtstar))\right] - \xtstar\right)$ within the norm and applying the triangle inequality, we get
\begin{align}
&\norm{\xtp - \xt}\notag
\\
&\leq\delta\norm{P_{\Xt}[\xt - \alpha\dk] - P_{\Xt}\left[\xtstar - \alpha\nabla\ft(\xtstar,\sigma_t(\xtstar))\right]} 
\notag\\
&\hspace{0.5cm}
+ \delta\norm{\xt - \xtstar} \notag
\\
&\stackrel{(a)}\leq2\delta\norm{\xt -\xtstar} + \delta\alpha\norm{\dk - \nabla\ft(\xtstar,\sigma_t(\xtstar))},\notag
\end{align}
where in \emph{(a)} we use a projection property and the triangle inequality. We add and subtract within the norm the term $\nabla\phit(\xt)\1\sum_{i=1}^{N}\nabla_2\fit(\xit,\sit)$ and use the expression of $\dk$ and $G_t$ and the triangle inequality to write
\begin{align}
&\norm{\xtp - \xt} \leq 2\delta\norm{\xt -\xtstar}
\notag\\
&\hspace{0.5cm}
+ \delta\alpha\norm{G_t(\xt,\st) - \nabla\ft(\xtstar,\sigma_t(\xtstar))}
\notag\\
&\hspace{0.5cm}
+ \delta\alpha\norm{\nabla\phit(\xt)\bigg(\yt - \1\sum_{i=1}^{N}\nabla_2\fit(\xit,\sit)\bigg)}
\notag\\
&\stackrel{(a)} 
= 2\delta\norm{\xt -\xtstar}
+ \delta\alpha\norm{G_t(\xt,\st) - \nabla\ft(\xtstar,\sigma_t(\xtstar))}
\notag\\
&\hspace{0.5cm}
+ \delta\alpha L_3\norm{\yt - \1\ytavg},\label{eq:before_dkstar}
\end{align}
where  in \emph{(a)} we use~\eqref{eq:y_mean} and $\norm{\nabla\phit(x)} \le L_3$. The definition of $G_t$ and its Lipschitz continuity (cf. Assumption~\ref{ass:lipschitz}) imply
\begin{align}
&\norm{G_t(\xt,\st) - \nabla\ft(\xtstar,\sigma_t(\xtstar))} 
\notag\\
&\leq L_1\norm{\xt - \xtstar} + L_1 \norm{\st- \1\sigma_t(\xtstar)}.\label{eq:dkavg_gkstar}
\end{align}
By combining~\eqref{eq:before_dkstar} with~\eqref{eq:dkavg_gkstar}, we get
\begin{align}
&\norm{\xtp  -  \xt}  \leq  \delta(2 +\alpha L_1) \norm{\xt -\xtstar} %
+ \delta\alpha L_1\norm{\st-\1\sigma_t(\xtstar)}
\notag\\
&\hspace{0.5cm}
+\delta\alpha L_3\norm{\yt - \1\ytavg} \notag
\\
&\stackrel{(a)} \leq  \delta(2 +\alpha L_1) \norm{\xt -\xtstar} %
+ \delta\alpha L_1\norm{\st-\1\stavg}\notag\\
&\hspace{0.5cm}+\delta\alpha L_1\norm{\1\stavg - \1\sigma_t(\xtstar)}+\delta\alpha L_3\norm{\yt - \1\ytavg},\label{eq:before_sigmak_sigmastar}
\end{align}
where in \emph{(a)} we add and subtract $\1\stavg$ and we apply the triangle inequality.
Now, consider the term $\norm{\1\stavg - \1\sigma_t(\xtstar)}$. By using
the definition of $\sigma_t$ and the Lipschitz continuity of $\phiit$ (cf. Assumption~\ref{ass:lipschitz}),
it can be seen that (see also~\cite{li2020distributed}) ,
\begin{align}
\norm{\1\stavg - \1\sigma_t(\xtstar)} &\leq L_3\norm{\xt - \xtstar}.\label{eq:sigmak_sigmastar}
\end{align}
By combining the results~\eqref{eq:before_sigmak_sigmastar} and~\eqref{eq:sigmak_sigmastar}, the proof is given. \oprocend

\subsection{Proof of Lemma~\ref{lemma:sk_skavg}}
\label{sec:proof_sk_skavg}

By applying~\eqref{eq:sigma_global_update} and~\eqref{eq:sigma_mean_update}, we can write
\begin{align}
&\norm{\stp - \1\stpavg} =\norm{A\st - \1\stavg + \tilde{I}(\phitp(\xtp) - \phit(\xt))}\notag
\\
&\stackrel{(a)}\leq\norm{\left(A - \I\right)\left(\st - \1\stavg\right)}\notag + \norm{\tilde{I}(\phitp(\xtp) - \phit(\xt))},\notag
\end{align}
where \emph{(a)} applies the triangle inequality, introduces $\tilde{I} := I - \I$, and uses the fact that $\1 \in \ker\left(A-\I\right)$. Now, we add and subtract within the second norm the term $\phitp(\xt)$ and we apply the triangle inequality obtaining
\begin{align}
&\norm{\stp - \1\stpavg} \leq\norm{\left(A - \I\right)\left(\st - \1\stavg\right)}\notag
\\
&\hspace{0.3cm}+ \norm{\tilde{I}(\phitp(\xtp) - \phitp(\xt))} + \norm{\tilde{I}(\phitp(\xt) - \phit(\xt))}\notag
\\
&\stackrel{(a)}\leq \Lambda\norm{\st - \1\stavg} + L_3\norm{\xtp - \xt} + \omega_t,\notag
\end{align}
where in \emph{(a)} we use the maximum eigenvalue $\Lambda$ of the matrix $A-\I$, Assumption~\ref{ass:lipschitz}, $\omega_t$ (cf~\eqref{eq:omega}), and $\norm{\tilde{I}} = 1$. By using Lemma~\ref{lemma:xkp_xk} to bound $\norm{\xtp - \xt}$, the proof follows.
\oprocend

\subsection{Proof of Lemma~\ref{lemma:yk_ykavg}}
\label{sec:proof_yk_ykavg}	
We use~\eqref{eq:y_global_update} and~\eqref{eq:y_mean_update} to write
\begin{align}
&\norm{\ytp - \1\ytpavg}\notag \leq\norm{A\yt - \ytavg}\notag
\\
&\hspace{0.5cm}
+ \norm{\left(I-\I\right)(\nabla_2\ftp(\xtp,\stp) - \nabla_2\ftp(\xt,\st)) }\notag
\\
&\stackrel{(a)}\leq\norm{\left(A-\I\right)(\yt-\1\ytavg)}\notag\\
&\hspace{0.5cm}+ \norm{\left(I- \I \right)(\nabla_2\ftp(\xtp,\stp) - \nabla_2\ftp(\xt,\st))}\notag\\
&\hspace{0.5cm}+ \norm{\left(I- \I \right)(\nabla_2\ftp(\xt,\st) - \nabla_2\ft(\xt,\st))},\notag
\end{align}
where \emph{(a)} uses $\1 \in \ker(A-\I)$ and applies the triangle inequality after adding and subtracting $(I- \I)\nabla_2\ftp(\xt,\st)$ within the norm. By using the maximum eigenvalue $\Lambda$ of $A-\I$, Assumption~\ref{ass:lipschitz}, and $\eta_t$ (cf.~\eqref{eq:eta}), we get
\begin{align}
&\norm{\ytp - \1\ytpavg} 
\notag\\
&\leq \Lambda\norm{\yt-\1\ytavg} + L_2\norm{\xtp - \xt} \!+\!  L_2\norm{\stp - \st}  \!+\! \eta_t,\notag
\end{align}
Now, we can use~\eqref{eq:y_global_update} to get
\begin{align}
&\norm{\ytp - \1\ytpavg} \leq
\Lambda\norm{\yt-\1\ytavg} + L_2\norm{\xtp - \xt}
\notag\\
& \hspace{0.5cm}
+ L_2\norm{(A-I)\st + \phitp(\xtp) - \phit(\xt)}+ \eta_t
\notag\\
&\stackrel{(a)}
\leq \Lambda\norm{\yt-\1\ytavg} + L_2\norm{\xtp - \xt} + L_2\norm{(A-I)(\st-\1\stavg)}
\notag\\
&\hspace{0.5cm}
+ L_2\norm{\phitp(\xtp) - \phit(\xt)}  + \eta_t,\notag
\end{align}
where in \emph{(a)} we apply the triangle inequality and the fact that $\1 \in \ker\left(A-\I\right)$.
	We add and subtract within the norm the term $\phitp(\xt)$ and apply the triangle inequality, obtaining
\begin{align}
&\norm{\ytp - \1\ytpavg}\notag \leq \Lambda\norm{\yt-\1\ytavg} + L_2\norm{\xtp - \xt} 
\notag\\
&\hspace{0.5cm}
+ L_2\norm{(A-I)(\st-\1\stavg)}
+ L_2\norm{\phitp(\xtp) - \phitp(\xt)}\notag
\\
&\hspace{0.5cm} + L_2\norm{\phitp(\xt) - \phit(\xt)} + \eta_t\notag
\\
&\stackrel{(a)}\leq\rho\norm{\yt-\1\ytavg} + L_2\norm{\xtp - \xt} + L_2\norm{(A-I)(\st-\1\stavg)}\notag
\\
&\hspace{0.5cm}+ L_2L_3\norm{\xtp - \xt} + L_2\omega_t + \eta_t,\notag
\end{align}
where \emph{(a)} uses Assumption~\ref{ass:lipschitz} and $\omega_t$ (cf.~\eqref{eq:omega}). The proof follows by $\norm{A - I} \leq 2$, and by applying Lemma~\ref{lemma:xkp_xk}.%
\oprocend

\subsection{Proof of Theorem~\ref{th:dyn_regret}}
\label{sec:proof_theorem}

Let us introduce $u_t$ to denote $u_t := \col(\zeta_t,
\eta_t,
\omega_t)$.
Then, by combining Lemma~\ref{lemma:xk_xstar},~\ref{lemma:sk_skavg}, and~\ref{lemma:yk_ykavg}, we bound the evolution of $z_t$ (defined in~\eqref{eq:zt}) through the following dynamical system
\begin{equation}\label{eq:z}
	z_{t+1} \leq M(\delta)z_t + Bu_t,
\end{equation}
in which 
\begin{equation*}
	M(\delta) := M_0 + \delta E, \quad B := \begin{bmatrix}
	1& 0& 0\\
	0& 1& 1\\
	0& 1& L_2
	\end{bmatrix},
\end{equation*}
where
\begin{align*}
	M_0 &:= \begin{bmatrix}
	1&  0& 0\\
	0& \Lambda& 0\\
	0& 2L_2& \Lambda
	\end{bmatrix}, \quad
	E := \begin{bmatrix}
	-\mu\alpha& \alpha L_1& \alpha L_3\\
	E_{21}& \alpha L_1L_3& \alpha L_3^2\\
	E_{31}&  E_{32}& E_{33}\\
	\end{bmatrix},
\end{align*}
with $E_{21} := 2 L_3 + \alpha L_1L_3 + \alpha L_1L_3^2$, $E_{31} := (2 + \alpha L_1 + \alpha L_1)(L_2 + L_2L_3)$, $E_{32} := \alpha L_1(L_2 + L_2L_3)$, and $E_{33} := \alpha L_3(L_2 + L_2L_3)$.
Being $M_0$ triangular, its spectral radius is $1$ 
since $\Lambda \in (0, 1)$ as implied by Assumption~\ref{ass:network}. Denote by $\chi(\delta)$ the eigenvalues of $M(\delta)$ as a
function of $\delta$. Call $v$ and $w$ respectively the right and left eigenvectors of $A_0$ 
associated to $1$. Then, $v = \begin{bmatrix}1&0&0\end{bmatrix}^\top$, $w = \begin{bmatrix}1&0&0\end{bmatrix}^\top$. Being $1$ a simple eigenvalue of $M(0)$, from~\cite[Theorem~6.3.12]{horn2012matrix} it holds
\begin{equation}\nonumber
	\frac{d\chi(\delta)}{d\delta}\bigg|_{\chi=1,\delta=0} = \frac{w^\top Ev}{w^\top v} = -\mu\alpha < 0.
\end{equation}
Then, by continuity of eigenvalues with respect to the matrix entries, there exists $\bar{\delta}>0$ so that $\rhomax(M(\delta)) < 1$ for any $\delta \in (0,\bar{\delta})$. From now on we will omit the dependency of $M$ and its eigenvalues from $\delta$. Since $z_t\geq 0$ for all $t$, and $M$ and $Bu_t$ have only non-negative entries, one can use~\eqref{eq:z} to write
\begin{equation}\label{eq:y_evolution}
	z_{t} \leq M^{t} z_0 + \sum_{k=0}^{t-1} M^{k}Bu_k.
\end{equation}
Pick $\theta \in (0, 1 - \rhomax(M))$ and define $\tilde{\rho} \triangleq \rhomax(M) + \theta$. Then, by~\cite[Lemma~5.6.10]{horn2012matrix},
there exists a matrix norm\footnote{An expression of $\|\cdot\|_\ped$ can be found in the proof of~\cite[Lemma~5.6.10]{horn2012matrix}.}, which we denote as $\|\cdot\|_\ped$, such that
$\| M \|_\ped \leq \rhomax(M) + \theta < 1$. Moreover, by applying~\cite[Theorem~5.7.13]{horn2012matrix}, there exists a vector norm, which we denote by $\| \cdot\|_\ped$,
which is compatible with the corresponding matrix norm, i.e.,
such that $\|Mv\|_\ped \leq \| M \|_\ped\|v\|_\ped$ for any matrix $M \in \R^{3 \times 3}$ and $v \in \R^3$. Using this fact, we use the norm $\|\cdot\|_\ped$ on both sides of~\eqref{eq:y_evolution} and we apply the triangle inequality to get
\begin{align}\notag
\norm{z_{t}}_\ped 
& \leq 
\norm{M^{t}z_0}_\ped + \norm{\sum_{k=0}^{t-1} M^{k}Bu_{t-k-1}}_\ped
\\
& \leq \tilde{\rho}^{t}\norm{z_0}_\ped + \sum_{k=0}^{t-1}\tilde{\rho}^{k}\norm{Bu_{t-k-1}}_\ped.
\label{eq:bound_y}
\end{align}
Being $\nabla\ft$ Lipschitz continuous (cf. Assumption~\ref{ass:lipschitz}), it holds
\begin{align}\label{eq:f_minus_fstar}
\ft(\xt,\sigma_t(\xt)) - \ft(\xtstar,\sigma_t(\xtstar)) &\leq \frac{L_1}{2}\|\xt - \xtstar\|^2
\notag\\
&\stackrel{(a)}{\leq}
\frac{L_1}{2}\|z_t\|^2,
\end{align}
where in \emph{(a)} uses the fact that $\norm{\xt - \xtstar}$ is a component of $z_t$.
Recalling that all norms are equivalent on finite-dimensional vector spaces, there always exist $\lambda_1 > 0$ and $\lambda_2 > 0$ such that $\norm{\cdot} \leq \lambda_1\norm{\cdot}_\ped$ and $\norm{\cdot}_\ped \leq \lambda_2\norm{\cdot}$.
Thus, by exploiting the square norm and combining the results~\eqref{eq:bound_y} with the equivalence of the norms, we can bound~\eqref{eq:f_minus_fstar} as
\begin{align*}
	&\ft(\xt,\sigma_t(\xt)) - \ft(\xtstar,\sigma_t(\xtstar)) 
	\leq 
	\frac{L_1\lambda_1^2}{2} \bigg(\tilde{\rho}^{2t} \norm{z_0}_\ped^2 
	\notag\\
	&
	+ 2\tilde{\rho}^t\norm{z_0}_\ped\sum_{k=0}^{t-1}\tilde{\rho}^{k}\norm{Bu_{t-k-1}}_\ped 
	+ \sum_{k=0}^{t-1}\tilde{\rho}^{2k}\norm{Bu_{t-k-1}}_\ped^2\bigg),
\end{align*}
which, combined with the definitions of $R_T$ (cf.~\eqref{eq:regret}), $U_T$, and $Q_T$ (cf.~\eqref{eq:U_T_Q_T}), and the equivalence of the norms, leads to
\begin{align}
	R_T &\leq \frac{L_1\lambda^2}{2}\left(\sum_{t=1}^T\tilde{\rho}^{2t}\norm{z_0}  + 2\norm{z_0} U_T + Q_T\right)
	\notag\end{align}
	\begin{align}
	&\stackrel{(a)}{\leq}
	\frac{L_1\lambda^2}{2}\left(\frac{\norm{z_0}^2}{1 - \tilde{\rho}^2}  + 2\norm{z_0} U_T + Q_T\right),
\end{align}
where $\lambda = \lambda_1 \lambda_2$ and \emph{(a)} uses the geometric series property.

As regards the result~\eqref{eq:constraint_violation}, we use~\eqref{eq:after_projection} to write 
\begin{align}
	\dist(\xtp,\Xtp)
	&= \dist(\xt + \delta(\txt - \xt),\Xtp)
	\notag\\
	&\stackrel{(a)}{\leq}  \dist(\xt + \delta(\txt - \xt),\Xt) + \gamma_t,\label{eq:constraint_violation_initial}
\end{align}
 where in \emph{(a)} we add and subtract the term $\dist(\xt + \delta(\txt - \xt),\Xt)$ and we introduce $\gamma_t$ (cf.~\eqref{eq:gamma}). Now, we recall that 
 \begin{align*}
	\dist(\xt + \delta(\txt - \xt), \Xt) = \min_{y \in \Xt}\norm{\xt + \delta(\txt - \xt) - y}.
 \end{align*}
 Thus, by adding and subtracting within the norm the term $(1-\delta)v_t$ with $v_t \in \Xt$ so that $\norm{\xt - v_t} = \dist(\xt,\Xt)$, we can use the triangle inequality and the definition of $\min$ to get 
 \begin{align*}
	&\dist(\xt + \delta(\txt - \xt), \Xt) 
	\notag\\
	&\leq (1-\delta)\norm{\xt - v_t}
	+ \min_{y \in \Xt}\norm{v_t + \delta(\txt - v_t) - y}
	\notag\\
	&=
	(1-\delta)\dist(\xt,\Xt) 
	+ \dist(v_t + \delta(\txt - v_t),\Xt),
 \end{align*}
which allows us to rewrite~\eqref{eq:constraint_violation_initial} as
\begin{align}
	\dist(\xtp,\Xtp) &\leq (1-\delta)\dist(\xt,\Xt)
	\notag\\
	&\hspace{0.5cm}
	+ \dist(v_t + \delta(\txt - v_t),\Xt) + \gamma_t.\label{eq:constraint_violation_intermediate}
\end{align}
Notice that $v_t, \txt \in \Xt$ and $0 < \delta < 1$, then $v_t + \delta(\txt - v_t) \in \Xt$ and the second term of~\eqref{eq:constraint_violation_intermediate} is null and~\eqref{eq:constraint_violation_intermediate} becomes
\begin{align}
 	\dist(\xtp,\Xtp) \leq (1-\delta)\dist(\xt,\Xt) + \gamma_t.\label{eq:constraint_violation_intermediate_2}
 \end{align}
Both members of~\eqref{eq:constraint_violation_intermediate_2} are always positive, then~\eqref{eq:constraint_violation_intermediate_2} leads to
\begin{align*}
	\dist(\xt,\Xt) \leq (1 - \delta)^t\dist(x_0,X_0) + \sum_{k=0}^{t-1}(1-\delta)^k \gamma_{t-k-1}.%
\end{align*}
By summing the latter for $t=1$ up to $t=T$ and using the geometric series property the proof follows.\oprocend %

\subsection{Proof of Corollary~\ref{cor:static}}
\label{sec:proof_cor_stat}
	Here, for all $t \ge 0$, it holds $u_t \equiv 0$ and $\xtstar = x^\star$. By the same arguments of Theorem~\ref{th:dyn_regret}, we use the Lipschitz continuity of $\nabla\ft$ (cf. Assumption~\ref{ass:lipschitz}), \eqref{eq:bound_y} with $u_t \equiv 0$, and the equivalence of the norms to get $f(\xt,\sigma(\xt)) - f(x^\star,\sigma(x^\star)) \leq \tilde{\rho}^{2t}\frac{L_1\lambda_1^2}{2}\norm{z^0}_\ped^2$. The proof follows by using again the equivalence of the norms and setting $\lambda := \lambda_1\lambda_2$.
	\oprocend	
\end{appendix}

\bibliographystyle{IEEEtran}

\end{document}